\theoremstyle{plain}
\newtheorem{theorem}{Theorem}
\numberwithin{equation}{section}
\newcommand{\ra}{\rightarrow}
\begin{document}

\title {Conservative inclusion of N\"orlund methods}

\date{}

\author[P.L. Robinson]{P.L. Robinson}

\address{Department of Mathematics \\ University of Florida \\ Gainesville FL 32611  USA }

\email[]{paulr@ufl.edu}

\subjclass{} \keywords{}

\begin{abstract}

We offer practical necessary and sufficient conditions in order that every sequence convergent relative to the N\"orlund summation method $(N, p)$ be convergent relative to the N\"orlund summation method $(N, q)$ without the requirement that limits be preserved. 

\end{abstract}

\maketitle

\medbreak

\section*{Conservative Inclusion} 

Let $(p_n : n \geqslant 0)$ be a real sequence with $p_0 > 0$ and with $p_n \geqslant 0$ whenever $n > 0$; write $P_n = p_0 + \cdots + p_n$ whenever $n \geqslant 0$. The {\it N\"orlund method} $(N, p)$ associates to each sequence $r = (r_n : n \geqslant 0)$ the sequence $N^p r$ defined by the rule that for each $m \geqslant 0$ 
$$(N^p r)_m = \frac{p_0 r_m + \cdots + p_m r_0}{p_0 + \cdots + p_m}.$$
We say that $r$ is $(N, p)$-{\it convergent} to the limit $\sigma$ precisely when $N^p r$ is convergent to $\sigma$ in the ordinary sense. We say that the N\"orlund method $(N, p)$ is {\it regular} precisely when each (ordinarily) convergent sequence is $(N, p)$-convergent to the same limit. 

\medbreak 

Now, let $(N, p)$ and $(N, q)$ be N\"orlund methods. We say that $(N, q)$ {\it includes} $(N, p)$ and write $(N, p) \rightsquigarrow (N, q)$ precisely when each $(N, p)$-convergent sequence is $(N, q)$-convergent to the same limit; as a special case, $(N, q)$ is regular precisely when $(N, u) \rightsquigarrow (N, q)$ where $u_0 = 1$ and $u_n = 0$ whenever $n > 0$. Practical necessary and sufficient conditions for $(N, p) \rightsquigarrow (N, q)$ were discovered by Marcel Riesz and communicated by letter [2] to Hardy, who included them in Chapter IV of his classic `{\it Divergent Series}' [1]. Riesz and Hardy only presented these necessary and sufficient conditions in case the N\"orlund methods $(N, p)$ and $(N, q)$ are regular - arguably the most important case. However, we showed recently [3] that the conditions are in fact always valid, without regularity assumptions. 

\medbreak 

It is of some interest to weaken this notion of inclusion, by simply requiring that each $(N, p)$-convergent sequence is $(N, q)$-convergent but not insisting that limits are preserved; let us symbolize this relation by $(N, p) \nersquigarrow (N, q)$ with an `off-target' arrow. For this weakened relation, the name {\it conservative inclusion} suggests itself. In fact, let $C_{p q} = [c_{m, n} : m, n \geqslant 0]$ be the matrix with the property that $N^q r = C_{p q} (N^p r)$ for each sequence $r$. On the one hand, we shall see that $(N, p) \nersquigarrow (N, q)$ holds precisely when the matrix $C_{p q}$ is conservative in the sense that multiplication by $C_{p q}$ sends convergent sequences to convergent sequences but may change limits. On the other hand, we saw in [3] that $(N, p) \rightsquigarrow (N, q)$ holds precisely when $C_{p q}$ is regular in the sense that multiplication by $C_{p q}$ sends convergent sequences to convergent sequences with preservation of limits; thus `regular' inclusion may be called {\it regular inclusion}. 

\medbreak 

Our aim here is to present practical necessary and sufficient conditions (similar to those found by Riesz) for the conservative inclusion of N\"orlund methods. Throughout, we shall use freely the notation of [3]. 

\medbreak 

\section*{Riesz Conditions} 

\medbreak 

Let $C = [ c_{m, n} : m, n \geqslant 0]$ be an infinite square matrix. To each sequence $s = (s_n : n \geqslant 0)$ we associate the sequence $t = C s$ defined by the rule that for each $m \geqslant 0$ 
$$t_m := \sum_{n = 0}^{\infty} c_{m, n} s_n$$
assumed convergent; in our application to N\"orlund methods, $C$ will be lower triangular so that convergence is not an issue. The matrix $C$ is said to be: {\it conservative} precisely when the convergence of $s$ implies the convergence of $C s$ but need not imply that $\lim Cs = \lim s$; {\it regular} precisely when conservative and limit-preserving. 

\medbreak 

A theorem of Kojima and Schur gives practical necessary and sufficient conditions for the matrix $C$ to be conservative. 

\medbreak 

\begin{theorem} \label{KS} 
The matrix $C$ is conservative precisely when each of the following conditions is satisfied: \\
{\rm (i)} there exists $H \geqslant 0$ such that for each $m \geqslant 0$ 
$$\sum_{n = 0}^{\infty} |c_{m, n}| \leqslant H;$$\\
{\rm (ii)} for each $n \geqslant 0$ there exists the limit 
$$\delta_n = \lim_{m \ra \infty} c_{m, n};$$\\
{\rm (iii)} there exists the limit 
$$\delta = \lim_{m \ra \infty} \sum_{n = 0}^{\infty} c_{m, n}.$$
\end{theorem}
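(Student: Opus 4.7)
The natural strategy is to handle sufficiency and necessity separately. Sufficiency is a straightforward split-the-sum estimate that uses all three conditions in concert; for necessity, conditions (ii) and (iii) fall out immediately from well-chosen test sequences, while condition (i) is the genuine obstacle and requires a uniform boundedness argument.

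For sufficiency, suppose (i), (ii), (iii) hold and let $s$ converge to $\sigma$. A preliminary observation is that letting $m \to \infty$ in the inequality $\sum_{n=0}^{N} |c_{m, n}| \leqslant H$ shows, via (ii), that $\sum_n |\delta_n| \leqslant H$, so the candidate limit $\sigma \delta + \sum_n \delta_n (s_n - \sigma)$ is well defined. I would then write
$$t_m = \sigma \sum_{n=0}^{\infty} c_{m, n} + \sum_{n=0}^{N-1} c_{m, n}(s_n - \sigma) + \sum_{n=N}^{\infty} c_{m, n}(s_n - \sigma)$$
and send $m \to \infty$ for fixed $N$: the first term tends to $\sigma \delta$ by (iii), the middle is a finite sum with term-wise limit by (ii), and the tail is bounded by $H \sup_{n \geqslant N} |s_n - \sigma|$ thanks to (i). A standard $\varepsilon$-argument (choose $N$ large first, then let $m \to \infty$) delivers the convergence of $(t_m)$ to the candidate limit.

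For necessity, the existence of $\delta_k$ in (ii) is recovered by applying $C$ to the unit sequence $e^{(k)}$ having $1$ in position $k$ and zero elsewhere, since $(C e^{(k)})_m = c_{m, k}$; the existence of $\delta$ in (iii) follows by applying $C$ to the constant sequence $(1, 1, \ldots)$, whose image is precisely $(\sum_n c_{m, n})_m$. The crux of the matter, and what I expect to be the main obstacle, is condition (i). My plan is in two stages. First, I would establish that each individual row is absolutely summable: if $\sum_n |c_{m, n}| = \infty$ for some fixed $m$, a gliding-hump construction selects disjoint index blocks on which the partial sums of $|c_{m, n}|$ can be made as large as desired, and a null sequence built by assigning $\mathrm{sgn}(c_{m, n})/k$ on the $k$-th block produces a convergent $s$ for which $\sum_n c_{m, n} s_n$ diverges, contradicting conservativity. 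Second, with each row absolutely summable, each linear functional $T_m \colon c \to \mathbb{R}$ sending $s \mapsto \sum_n c_{m, n} s_n$ is bounded on the Banach space $c$ (with sup norm) and has norm $\sum_n |c_{m, n}|$; since conservativity makes $(T_m(s))_m$ convergent hence bounded for every $s \in c$, the Banach--Steinhaus theorem then delivers the desired uniform bound (i).
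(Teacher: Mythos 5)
Your argument is correct, but it is worth noting that the paper does not prove this theorem at all: its entire proof is the citation ``See [1] Theorem 1,'' deferring to Hardy's \emph{Divergent Series} for the Kojima--Schur theorem. Your sufficiency argument (the three-way split of $t_m$ around the candidate limit $\sigma\delta + \sum_n \delta_n(s_n-\sigma)$, with the tail controlled by (i) and the preliminary bound $\sum_n|\delta_n|\leqslant H$) is the standard one and matches what Hardy does. For necessity, your treatment of (ii) and (iii) via the unit sequences and the constant sequence is also standard. Where you genuinely diverge is in establishing (i): you first rule out a divergent row by a gliding-hump construction (which is needed, and which you handle correctly since the blocks can always be continued past any finite initial segment), and then invoke Banach--Steinhaus on the functionals $T_m$ of norm $\sum_n|c_{m,n}|$ acting on the Banach space $c$. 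Hardy's proof of the same step is a purely elementary, self-contained gliding-hump construction of a single convergent sequence whose transform is unbounded when the row sums are unbounded, avoiding any appeal to functional analysis. Your route buys brevity and conceptual clarity at the cost of importing the uniform boundedness principle; the classical route is longer but elementary, which matters for a 1949 text and for readers wanting an explicit witness sequence. Either way, your proof is complete and would stand on its own where the paper merely cites.
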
 

\begin{proof} 
See [1] Theorem 1. 
\end{proof} 

\medbreak 

See [1] Theorem 2 for the corresponding characterization of regular matrices, in which $\delta = 1$ and $\delta_n = 0$ whenever $n \geqslant 0$. 

\medbreak 

Let $(N, p)$ and $(N, q)$ be N\"orlund methods. As $p_0$ is nonzero, the triangular Toeplitz system 
$$q_n = k_0 p_n + \cdots + k_n p_0 \; \; \; (n \geqslant 0)$$
has a unique solution $(k_n : n \geqslant 0)$; by summation, this solution also satisfies 
$$Q_n = k_0 P_n + \cdots + k_n P_0 \; \; \; (n \geqslant 0).$$

\medbreak 

The following result exhibits the square matrix $C_{p q}$ with the property that if $r$ is any sequence then $N^q r = C_{p q} (N^p r)$. 

\medbreak 

\begin{theorem} \label{NN} 
If $r = (r_n : n \geqslant 0)$ is any sequence then 
$$ (N^q r)_m = \sum_{n = 0}^{\infty} c_{m, n} (N^p r)_n$$
where if $n > m$ then $c_{m, n} = 0$ while if $n \leqslant m$ then $c_{m, n} = k_{m - n} P_n / Q_m$. 
\end{theorem}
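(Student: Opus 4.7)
The plan is to treat everything as discrete convolutions and use associativity. For any sequences $a, b$ write $(a*b)_n = \sum_{j=0}^n a_j b_{n-j}$. By the very definition of the N\"orlund methods we have
$$(N^p r)_n = \frac{(p*r)_n}{P_n}, \qquad (N^q r)_m = \frac{(q*r)_m}{Q_m},$$
and the defining Toeplitz system for $(k_n)$ says precisely that $q = k * p$. The proof then amounts to three steps: write $(N^q r)_m Q_m = (q*r)_m$; substitute $q = k*p$ and invoke associativity of convolution to get $(q*r)_m = (k*(p*r))_m$; then expand the outer convolution and recognize $(p*r)_n$ as $P_n (N^p r)_n$.

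Carrying this out, the middle step gives
$$(q*r)_m = \sum_{n=0}^{m} k_{m-n} (p*r)_n = \sum_{n=0}^{m} k_{m-n} P_n (N^p r)_n.$$
Dividing through by $Q_m$ yields
$$(N^q r)_m = \sum_{n=0}^{m} \frac{k_{m-n} P_n}{Q_m} (N^p r)_n,$$
which is exactly the stated formula, since the prescription $c_{m,n} = 0$ for $n > m$ allows the finite sum to be harmlessly extended to the full series $\sum_{n=0}^{\infty} c_{m,n} (N^p r)_n$; no convergence question arises because $C_{pq}$ is lower triangular.

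There is no real obstacle here beyond bookkeeping: the content of the theorem is the associativity of convolution, and the only thing to watch is the index shift between $(k*p)_n$ and $\sum_{n} k_{m-n} P_n$. If one prefers a hands-on verification not invoking convolution as a black box, the identity $q = k*p$ may be substituted term-by-term into $q_0 r_m + \cdots + q_m r_0$ and the resulting double sum reorganized by collecting, for each fixed $n \leqslant m$, the coefficient of $k_{m-n}$; that coefficient is $p_0 r_n + \cdots + p_n r_0 = P_n (N^p r)_n$, and one arrives at the same conclusion.
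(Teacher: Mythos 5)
Your argument is correct. Note, however, that the paper itself offers no internal proof of this statement: it simply cites Theorem 2 of [3], so there is nothing in the present text to compare against step by step. Your self-contained route --- writing $(N^p r)_n = (p*r)_n/P_n$ and $(N^q r)_m = (q*r)_m/Q_m$, observing that the defining Toeplitz system says $q = k*p$, and then applying associativity of convolution to get $(q*r)_m = (k*(p*r))_m = \sum_{n=0}^m k_{m-n} P_n (N^p r)_n$ before dividing by $Q_m$ --- is exactly the natural computation, and it is the standard way this identity is established (it is essentially what one finds in [3] and in Hardy's treatment). Your closing remarks are also sound: extending the finite sum to an infinite one is harmless because $c_{m,n}=0$ for $n>m$, and the lower-triangularity disposes of any convergence question. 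The one gain of your write-up over the paper's is that it makes the result self-contained; the one thing you might add for completeness is a single line verifying associativity of the finite convolution (a two-line index swap), since that is the only step you invoke as a black box.
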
 

\begin{proof} 
This is Theorem 2 of [3]. 
\end{proof} 

\medbreak 

As indicated, we shall henceforth denote the square matrix $[ c_{m, n} : m, n \geqslant 0]$ displayed here by $C_{p q}$. 

\medbreak 

In preparation for our next result, notice that if $s = (s_n : n \geqslant 0)$ is any sequence then $s = N^p r$ for a unique sequence $r$ found by solving the triangular Toeplitz system 
$$P_n s_n = p_0 r_n + \cdots + p_n r_0 \; \; \; (n \geqslant 0).$$

\medbreak 

\begin{theorem} \label{squig}
The conservative inclusion $(N, p) \nersquigarrow (N, q)$ holds precisely when the matrix $C_{p q}$ is conservative. 
\end{theorem}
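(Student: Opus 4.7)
The plan is to leverage Theorem \ref{NN} together with the invertibility remark that immediately precedes the statement, which sets up a bijection between arbitrary sequences $r$ and their $(N,p)$-transforms $s = N^p r$. With these two tools in hand, both implications of Theorem \ref{squig} reduce to translations between the language of $(N,p)$- and $(N,q)$-convergence and the language of the matrix $C_{pq}$ acting on sequences.

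For the forward implication of the equivalence, I would assume $C_{pq}$ is conservative and verify $(N,p) \nersquigarrow (N,q)$ directly. Let $r$ be any $(N,p)$-convergent sequence, so $N^p r$ converges in the ordinary sense. Conservativeness of $C_{pq}$ then forces $C_{pq}(N^p r)$ to converge; but Theorem \ref{NN} identifies this sequence with $N^q r$, so $r$ is $(N,q)$-convergent as required (with no claim on the limit, which fits the weakened notion).

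For the reverse implication, I would assume $(N, p) \nersquigarrow (N, q)$ and establish conservativeness of $C_{pq}$ from the definition, namely that $C_{pq}$ carries convergent sequences to convergent sequences. Given a convergent sequence $s$, the paragraph preceding the theorem produces a unique sequence $r$ with $s = N^p r$ via the triangular Toeplitz system in the $p_n$'s. Then $N^p r = s$ converges, so $r$ is $(N,p)$-convergent; the hypothesis $(N,p) \nersquigarrow (N,q)$ yields convergence of $N^q r$; and Theorem \ref{NN} again identifies $N^q r$ with $C_{pq}(N^p r) = C_{pq} s$, proving that $C_{pq} s$ converges.

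There is no genuine obstacle here: the entire argument is a bookkeeping exercise that consists of chasing the identity $N^q r = C_{pq}(N^p r)$ back and forth through the bijection $r \lbia N^p r$. If anything merits a cautionary word, it is only the point that conservativeness permits, but does not require, a change of limit, which is exactly what allows the equivalence to be stated without any constraint relating $\lim N^p r$ to $\lim N^q r$.
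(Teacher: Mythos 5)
Your proposal is correct and follows essentially the same route as the paper's own proof: both directions are handled by chasing the identity $N^q r = C_{p q}(N^p r)$ from Theorem \ref{NN}, with the reverse implication using the unique solvability of $P_n s_n = p_0 r_n + \cdots + p_n r_0$ to realize an arbitrary convergent $s$ as $N^p r$. The only difference is the order in which you present the two implications.
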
 

\begin{proof} 
Assume $(N, p) \nersquigarrow (N, q)$. Let the sequence $s$ be convergent: with $s = N^p r$ as discussed prior to the theorem, $r$ is $(N, p)$-convergent so that (by assumption) $r$ is $(N, q)$-convergent; now Theorem \ref{NN} tells us that $C_{p q} s = C_{p q} (N^p r) = N^q r$ is convergent. This proves $C_{p q}$ conservative. 
\medbreak 
Assume that $C_{p q}$ is conservative. If $r$ is $(N, p)$-convergent, then $N^p r$ converges whence (by assumption and Theorem \ref{NN}) $N^q r$ converges, so $r$ is $(N, q)$-convergent. This proves that $(N, p) \nersquigarrow (N, q)$. 
\end{proof} 

\medbreak 

We now summon Theorem \ref{KS} to derive practical conditions for the specific matrix $C_{p q}$ to be conservative. 

\medbreak 

\begin{theorem} \label{Riesz} 
The matrix $C_{p q}$ is conservative precisely when each of the following conditions is satisfied: \\
{\rm (1)} there exists $H \geqslant 0$ such that for each $m \geqslant 0$  
$$|k_0| P_m + \cdots + |k_m| P_0 \leqslant H Q_m;$$
{\rm (2)} for each $n \geqslant 0$ there exists the limit 
$$\varepsilon_n = \lim_{m \ra \infty} \frac{k_{m - n}}{Q_m}.$$
\end{theorem}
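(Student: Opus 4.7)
The plan is to invoke Theorem \ref{KS} directly on the matrix $C_{pq}$ of Theorem \ref{NN} and translate its three conditions (i), (ii), (iii) into statements about the sequences $(p_n)$, $(q_n)$, $(k_n)$. Since $c_{m,n}$ vanishes for $n > m$, the infinite sums over $n$ reduce to finite sums, and since $p_0 > 0$ and $q_0 > 0$ we have $P_n > 0$ and $Q_m > 0$ throughout, so the needed divisions and cancellations are legitimate.

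First I would handle condition (i). Multiplying the inequality $\sum_{n=0}^{\infty}|c_{m,n}| \leqslant H$ through by $Q_m > 0$, and using $c_{m,n} = k_{m-n}P_n/Q_m$ for $n \leqslant m$, one obtains
$$|k_0|P_m + |k_1| P_{m-1} + \cdots + |k_m| P_0 \leqslant H Q_m,$$
which is precisely condition (1); the equivalence is reversible. Next I would handle condition (ii): for fixed $n \geqslant 0$, the constant $P_n > 0$ can be divided out, so the existence of $\delta_n = \lim_m c_{m,n}$ is equivalent to the existence of $\varepsilon_n = \lim_m k_{m-n}/Q_m$, and $\delta_n = P_n \varepsilon_n$. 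This is exactly condition (2).

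The pivotal observation is that condition (iii) is automatically satisfied and therefore does not appear in the statement of Theorem \ref{Riesz}. Indeed, the summed Toeplitz identity recorded just before Theorem \ref{NN},
$$Q_m = k_0 P_m + k_1 P_{m-1} + \cdots + k_m P_0,$$
says precisely that $\sum_{n=0}^{m} k_{m-n} P_n = Q_m$, and therefore
$$\sum_{n=0}^{\infty} c_{m,n} \;=\; \frac{1}{Q_m}\sum_{n=0}^{m} k_{m-n} P_n \;=\; 1$$
for every $m \geqslant 0$. So the limit $\delta$ of condition (iii) exists (equalling $1$) with no hypothesis on $p$ or $q$.

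Combining the three translations, Theorem \ref{KS} tells us that $C_{pq}$ is conservative iff (1) and (2) both hold, which is the conclusion. The only step requiring any thought is recognizing that the row-sum identity makes condition (iii) of Kojima--Schur automatic; the rest is essentially bookkeeping with $P_n$ and $Q_m$.
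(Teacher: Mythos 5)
Your proposal is correct and follows exactly the paper's own route: translate Kojima--Schur condition (i) into the weighted inequality (1) by clearing the denominator $Q_m$, translate condition (ii) into the existence of $\varepsilon_n$ by dividing out the positive constant $P_n$, and observe that condition (iii) holds automatically with $\delta = 1$ because the summed Toeplitz identity $Q_m = k_0 P_m + \cdots + k_m P_0$ forces every row sum of $C_{pq}$ to equal $1$. Nothing further is needed.
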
 

\begin{proof} 
Theorem \ref{KS}(i) is equivalent to the existence of $H \geqslant 0$ such that 
$$\frac{|k_m| P_0 + \cdots + |k_0| P_m}{Q_m} =  \sum_{n = 0}^{m} \Big|\frac{k_{m - n} P_n}{Q_m}\Big| = \sum_{n = 0}^{\infty} |c_{m, n}| \leqslant H$$
or 
$$|k_0| P_m + \cdots + |k_m| P_0 \leqslant H Q_m.$$
\medbreak 
\noindent
Theorem \ref{KS}(ii) is equivalent to the existence, for each $n \geqslant 0$, of the limit 
$$\delta_n = \lim_{m \ra \infty} c_{m, n} = \lim_{m \ra \infty} \frac{k_{m - n} P_n}{Q_m};$$
equivalently, of the limit 
$$\varepsilon_n = \lim_{m \ra \infty} \frac{k_{m - n}}{Q_m}.$$
\medbreak 
\noindent 
Theorem \ref{KS}(iii) is automatic (with $\delta = 1$) by virtue of the fact that for each $m \geqslant 0$ 
$$Q_m = k_0 P_m + \cdots + k_m P_0.$$
\end{proof} 

\medbreak 

 Taken together, Theorem \ref{squig} and Theorem \ref{Riesz} yield practical necessary and sufficient conditions for conservative inclusion of N\"orlund methods. 

\medbreak 

We pause for an inspection of these conditions. Condition (1) is exactly the first Riesz condition, labelled ${\bf R}_{p q}^1$ in [3]. Condition (2) bifurcates according to whether the value of $\varepsilon_0$ is or is not zero. 

\medbreak 

The case $0 = \varepsilon_0 = \lim_{m \ra \infty} (k_m / Q_m)$ amounts to the second Riesz condition, labelled ${\bf R}_{p q}^2$ in [3]. In this case, the matrix $C_{p q}$ is regular and in fact $(N, p) \rightsquigarrow (N, q)$ holds; see Theorem 4 and Theorem 3 in [3]. Of course, in this case each $\varepsilon_n$ is zero because 
$$\frac{|k_{m - n}|}{Q_m} \leqslant \frac{|k_{m - n}|}{Q_{m - n}} \ra 0 \; {\rm as} \; m \ra \infty.$$

\medbreak 

The case $\lim_{m \ra \infty} k_m/Q_m = \varepsilon_0 \neq 0$ is new; in this case, if $m$ is large enough then $k_m$ is nonzero (and indeed of constant sign). Fix $n \geqslant 0$: passing to the limit as $m \ra \infty$ in 
$$\frac{Q_{m - n}}{Q_m} = \frac{Q_{m - n}}{k_{m - n}} \frac{k_{m - n}}{Q_m}$$
yields 
$$\lim_{m \ra \infty} \frac{Q_{m - n}}{Q_m} = \frac{\varepsilon_n}{\varepsilon_0}$$
which taken with 
$$\frac{Q_{m - n}}{Q_m} = \frac{Q_{m - n}}{Q_{m - n + 1}} \cdot \; \cdots \; \cdot \frac{Q_{m - 1}}{Q_m}$$
yields 
$$\frac{\varepsilon_n}{\varepsilon_0} = \Big( \frac{\varepsilon_1}{\varepsilon_0} \Big)^n.$$
\medbreak
\noindent
Accordingly, the case $\varepsilon_0 \neq 0$ has only $\varepsilon_1$ as an additional parameter, with 
$$\lim_{m \ra \infty} \frac{k_{m - n}}{Q_m} = \varepsilon_0 \Big(\frac{\varepsilon_1}{\varepsilon_0}\Big)^n.$$ 

\medbreak 

This simplifies the necessary and sufficient conditions as follows. 

\medbreak 

\begin{theorem} \label{necsuf}
The conservative inclusion $(N, p) \nersquigarrow (N, q)$ holds precisely when: \\
{\rm (1)} there exists $H \geqslant 0$ such that $|k_0| P_m + \cdots + |k_m| P_0 \leqslant H Q_m$ whenever  $m \geqslant 0;$ and \\
{\bf either} {\rm (2)(i)} there exist $\varepsilon_0 = \lim_{m \ra \infty} (k_m / Q_m) \neq 0$ and $\beta = \lim_{m \ra \infty} (Q_{m - 1} / Q_m)$ \\ {\bf or} {\rm (2)(ii)} the sequence $(k_m / Q_m : m \geqslant 0)$ converges to zero. 
\end{theorem}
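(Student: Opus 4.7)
The plan is to bolt together Theorem \ref{squig} and Theorem \ref{Riesz} and then repackage condition (2) of Theorem \ref{Riesz} using precisely the observations already collected in the paragraphs preceding the theorem statement. By Theorem \ref{squig} and Theorem \ref{Riesz}, the conservative inclusion $(N, p) \nersquigarrow (N, q)$ is equivalent to condition (1) together with the existence, for every $n \geqslant 0$, of the limit $\varepsilon_n = \lim_{m \ra \infty} k_{m - n}/Q_m$. Since condition (1) is common to both theorems, everything reduces to showing that the existence of every $\varepsilon_n$ is equivalent to the disjunction (2)(i)\,$\vee$\,(2)(ii).

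For the forward implication, assume every $\varepsilon_n$ exists. If $\varepsilon_0 = 0$, then (2)(ii) holds outright. If $\varepsilon_0 \neq 0$, then by shifting the index the sequence $k_{m - 1}/Q_{m - 1}$ also tends to $\varepsilon_0 \neq 0$, so $k_{m - 1}$ is eventually nonzero and the factorization
$$\frac{Q_{m - 1}}{Q_m} = \frac{Q_{m - 1}}{k_{m - 1}} \cdot \frac{k_{m - 1}}{Q_m}$$
is eventually meaningful; passing to the limit yields $\beta = \lim_{m \ra \infty} Q_{m - 1}/Q_m = \varepsilon_1/\varepsilon_0$, which is case (2)(i).

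For the converse, in case (2)(ii) the monotonicity $Q_m \geqslant Q_{m - n}$ gives $|k_{m - n}|/Q_m \leqslant |k_{m - n}|/Q_{m - n} \ra 0$, so every $\varepsilon_n$ exists and equals $0$. In case (2)(i), the telescoping product
$$\frac{Q_{m - n}}{Q_m} = \frac{Q_{m - n}}{Q_{m - n + 1}} \cdot \; \cdots \; \cdot \frac{Q_{m - 1}}{Q_m}$$
converges to $\beta^n$, while $k_{m - n}/Q_{m - n} \ra \varepsilon_0$ by shifting; multiplying, $\varepsilon_n = \varepsilon_0 \beta^n$ exists for every $n$.

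There is really no serious obstacle here, as the substantive computation $\varepsilon_n/\varepsilon_0 = (\varepsilon_1/\varepsilon_0)^n$ has already been performed in the text that precedes the theorem. The only tiny point requiring care is the forward direction in case $\varepsilon_0 \neq 0$: one must verify that $k_{m-1}$ is eventually nonzero in order to licence the factorization above, which follows immediately from the very assumption $\varepsilon_0 \neq 0$ together with the index shift.
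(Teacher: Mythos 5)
Your proposal is correct and takes essentially the same route as the paper: the paper's proof simply combines Theorem \ref{squig} and Theorem \ref{Riesz} with the bifurcation discussion that precedes the theorem, and your argument reproduces exactly those computations (the case $\varepsilon_0 = 0$ via the bound $|k_{m-n}|/Q_m \leqslant |k_{m-n}|/Q_{m-n}$, and the case $\varepsilon_0 \neq 0$ via the factorization and telescoping product giving $\varepsilon_n = \varepsilon_0 \beta^n$). You merely write out in full the details the paper delegates to the surrounding text, including the correct observation that $k_{m-1}$ is eventually nonzero when $\varepsilon_0 \neq 0$.
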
 

\begin{proof} 
Simply combine Theorem \ref{squig} and Theorem \ref{Riesz} with the discussion thereafter. Note that in case (i) if $n > 0$ then $k_{m - n} / Q_m \ra \varepsilon_0 \beta^n$ as $m \ra \infty$ while case (ii) corresponds to regular inclusion $(N, p) \rightsquigarrow (N, q)$.  
\end{proof} 

\medbreak 

In closing, we should perhaps clear up a point of potential concern. Recall that any two regular N\"orlund methods $(N, p)$ and $(N, q)$ are {\it consistent} in the sense that if a sequence is both $(N, p)$-convergent to $\sigma$ and $(N, q)$-convergent to $\tau$ then $\sigma = \tau$; see [1] Theorem 17 for example. On account of this, if regular N\"orlund methods satisfy $(N, p) \nersquigarrow (N, q)$ then they actually satisfy $(N, p) \rightsquigarrow (N, q)$. It is comforting to see that this is reflected in our necessary and sufficient conditions. 

\medbreak

Thus, assume that $(N, p)$ and $(N, q)$ satisfy the first Riesz condition ${\bf R}_{p q}^1$ and assume that $k_m / Q_m \ra \varepsilon_0 \neq 0$ as $m \ra \infty$. We shall show that that these assumptions are incompatible with the regularity of $(N, q)$. Recall (from [1] Theorem 17 for instance) that the regularity of $(N, q)$ is equivalent to the requirement that $Q_{m - 1} / Q_m \ra 1$ as $m \ra \infty$. In the present context, this is equivalent to $\varepsilon_1 = \varepsilon_0$ and implies that for each $n \geqslant 0$ 
$$\lim_{m \ra \infty} \frac{k_{m - n}}{Q_m} = \varepsilon_n = \varepsilon_0 \Big( \frac{\varepsilon_1}{\varepsilon_0} \Big)^n = \varepsilon_0 \neq 0$$ 
in light of the simplifying discussion that leads to Theorem \ref{necsuf}. Now, fix 
$$n > \frac{2 H}{P_0 |\varepsilon_0|}\: .$$
For each $0 \leqslant \nu < n$ choose $m_{\nu} \geqslant \nu$ so that 
$$m \geqslant m_{\nu} \Rightarrow \frac{|k_{m - \nu}|}{Q_m} \geqslant \frac{1}{2}|\varepsilon_0| > 0$$ 
and choose any $m$ greater than each of $m_0 , \dots , m_{n - 1}$. From ${\bf R}_{p q}^1$ in the form 
$$|k_0| P_m + \cdots + |k_m| P_0 \leqslant H Q_m$$
we deduce that 
$$(|k_{m - n + 1}| + \cdots + |k_m| ) P_0  \leqslant H Q_m$$
\medbreak
\noindent
and draw the absurd conclusion 
$$n \leqslant \frac{2 H}{P_0 |\varepsilon_0|}\:.$$

\medbreak 

\bigbreak

\begin{center} 
{\small R}{\footnotesize EFERENCES}
\end{center} 
\medbreak

[1] G.H. Hardy, {\it Divergent Series}, Clarendon Press, Oxford (1949). 

\medbreak 

[2] M. Riesz, {\it Sur l'\'equivalence de certaines m\'ethodes de sommation}, Proceedings of the London Mathematical Society (2) {\bf 22} (1924) 412-419. 

\medbreak 

[3] P.L. Robinson, {\it Marcel Riesz on N\"orlund Means}, arXiv 1712.08592 (2017).

\medbreak

\end{document}